\documentclass[12pt]{amsart}

\usepackage{amssymb}
\usepackage{fullpage}
\usepackage{amsmath}
\usepackage{amsfonts}
\usepackage{latexsym}
\usepackage{hyperref}

\usepackage{amsthm}
\usepackage{amscd}
\usepackage{verbatim}
\usepackage{setspace}
\usepackage{graphicx} 
\usepackage{url}
\usepackage{cite}

\usepackage{enumitem}
\setlist{leftmargin=20mm}

\theoremstyle{theorem}
\newtheorem{thm}{Theorem} 
\numberwithin{thm}{section}

\newtheorem{cor}[thm]{Corollary} 
\newtheorem{conj}[thm]{Conjecture}
\newtheorem{prop}[thm]{Proposition} 
\theoremstyle{definition}
 
\newtheorem{defn}[thm]{Definition}
\newtheorem{ques}[thm]{Question}
\newtheorem{nota}[thm]{Notation}

\theoremstyle{remark}

\DeclareMathOperator{\lcm}{lcm}

\title{Two Classes of Modular $p$-Stanley sequences}
\author{Mehtaab Sawhney}
\thanks{Massachusetts Institute of Technology, Cambridge MA. Email: \texttt{msawhney@math.mit.edu}}
\author{Jonathan Tidor}
\thanks{Massachusetts Institute of Technology, Cambridge MA. Email: \texttt{jtidor@mit.edu}}
\date{}

\begin{document}

\begin{abstract}
Consider a set $A$ with no $p$-term arithmetic progressions for $p$ prime. The $p$-Stanley sequence of a set $A$ is generated by greedily adding successive integers that do not create a $p$-term arithmetic progression. For $p>3$ prime, we give two distinct constructions for $p$-Stanley sequences which have a regular structure and satisfy certain conditions in order to be modular $p$-Stanley sequences, a set of particularly nice sequences defined by Moy and Rolnick which always have a regular structure.

Odlyzko and Stanley conjectured that the 3-Stanley sequence generated by $\{0,n\}$ only has a regular structure if $n=3^k$ or $n=2\cdot 3^k$. For $p>3$ we find a substantially larger class of integers $n$ such that the $p$-Stanley sequence generated from $\{0,n\}$ is a modular $p$-Stanley sequence and numerical evidence given by Moy and Rolnick suggests that these are the only $n$ for which the $p$-Stanley sequence generated by $\{0,n\}$ is a modular $p$-Stanley sequence. Our second class is a generalization of a construction of Rolnick for $p=3$ and is thematically similar to the analogous construction by Rolnick.
\end{abstract}

\maketitle
\section{Introduction}
For an odd prime $p$, a set is called \textit{$p$-free} if it contains no $p$-term arithmetic progression. Szekeres conjectured that for $p$ an odd prime, the maximum number of elements in a $p$-free subset of $\{0,1,\ldots,n-1\}$ grows as $n^{\log_{p-1}p}$ \cite{erd2}. This conjecture however has been disproved. In particular, Elkin \cite{lower} proves the best known lower bound for $3$-free sets of $O(n^{1-o(1)})$ while the best proven upper bound is $O(n(\log\log n)^5/\log n)$ due to recent work of Sanders \cite{upper}.

The inspiration for Szekeres's conjecture however is of interest. In particular, Szekeres's conjecture is based on the sequence constructed by starting with 0 and greedily adding each subsequent integer that does not create a $p$-term arithmetic progression. The sequence produced is exactly the nonnegative integers that have no digit of $p-1$ in their base $p$ expansion. In 1978, Odlyzko and Stanley generalized this construction to arbitrary sets \cite{stan}.

\begin{defn}
Let $A:=\{a_1,\ldots,a_n\}$ be a finite set of nonnegative integers that contains $0$ with no nontrivial $p$-term arithmetic progressions. Furthermore take $0=a_1<a_2<\cdots<a_n$ and for each integer $k\ge n$, let $a_{k+1}$ be the least integer greater than $a_k$ such that $\{a_1,\ldots, a_k,a_{k+1}\}$ has no $p$-term arithmetic progressions. The \textit{$p$-Stanley sequence} $S_p(A)$, also written as $S_p(a_1,\ldots, a_n)$, is the sequence $a_1,\ldots, a_n, a_{n+1}, \ldots$.
\end{defn}

In the language of Stanley sequences the previous example is precisely $S_p(0)$. Odlyzko and Stanley noticed that for some sets $A$, the Stanley sequence $S_3(A)$ displays a regular pattern in terms of the ternary representations of its terms and these sequences grow as $n^{\log_23}$. In particular, they explicitly computed $S_3(0,3^k)$ and $S_3(0,2\cdot3^k)$ and showed that these sequences satisfy the above properties. However, for other values of $m$, the sequence $S_3(0,m)$ seems to grow chaotically and at the rate $n^2/\log n$. In particular, Lindhurst \cite{L} computed $S_3(0,4)$ for large values and observes that it appears to follow this second growth rate.

Odlyzko and Stanley provided a heuristic argument why a randomly chosen sequence should grow at the rate $n^2/\log n$ and conjectured that these two behaviors are the only possible ones. Further work on the growth of chaotic $p$-Stanley sequences for $p>3$ can be found in \cite{t2}. This leads to the following conjecture, which is explicitly stated for $p=3$ in \cite{stan}.

\begin{conj}[Based on \cite{stan},\cite{t2}]
A $p$-Stanley sequence $a_1,a_2,\ldots$ with $p$ an odd prime satisfies either:
\begin{itemize}
\item[Type 1:]$a_n=\Theta(n^{\log_{(p-1)}p})$
\item[Type 2:]$a_n=\Theta\left(n^{(p-1)/(p-2)}/(\log n)^{1/(p-2)}\right)$.
\end{itemize}
\end{conj}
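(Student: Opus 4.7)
The plan is to split the conjecture into two essentially independent parts: the \emph{existence} halves, which assert that each of the two asymptotic rates is actually achieved by some Stanley sequence, and the \emph{dichotomy} half, which asserts that no Stanley sequence falls strictly outside these two regimes. Since the full conjecture is a long-standing open problem even for $p=3$, the realistic goal is to produce new families of sequences that provably satisfy one of the two bounds, and to reduce the dichotomy to a structural classification.

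For the Type~I side, the natural strategy is to enlarge the class of \emph{modular} Stanley sequences beyond the initial sets $\{0,n\}$ treated in the present paper. The heuristic is that a modular sequence consists of exactly those nonnegative integers whose digits in a suitable mixed base avoid a prescribed set of residues, in direct analogy with the characterization of $S_p(\{0\})$ as integers whose base-$p$ expansion omits the digit $p-1$. If one can establish a self-similar identification of the form $a_{n+(p-1)^kc}=a_n+p^kc'$ for all sufficiently large $k$, a direct counting argument immediately yields $a_n=\Theta(n^{\log_{p-1}p})$. The central technical step is verifying that this self-similarity persists, i.e.\ that no late-arriving $p$-term progression forces the modular structure to degenerate; this is what would impose arithmetic conditions on the starting set $A$.

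For the Type~II side, I would begin with the Odlyzko--Stanley heuristic. For a ``generic'' $p$-Stanley sequence $a_1<a_2<\cdots$ with $a_k$ in range $[0,a_k]$, the expected number of $(p-1)$-term APs inside $\{a_1,\dots,a_k\}$ whose continuation exceeds $a_k$ scales like $k^{p-1}/a_k^{p-3}$, and each such AP forbids one future value. Imposing that the density of forbidden integers near $a_k$ is $1-o(1)$ gives the defining relation $\alpha(p-2)=p-1$, i.e.\ $a_k\sim k^{(p-1)/(p-2)}$; tracking the logarithmic slack then produces the $(\log k)^{-1/(p-2)}$ correction. I would try to promote this heuristic to a theorem via concentration for a randomized variant of the greedy process, in the spirit of the triangle-free process and the random greedy colouring analyses, and then couple the randomized process back to the deterministic Stanley sequence.

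The main obstacle, and the step I expect to remain out of reach with current techniques, is the dichotomy itself: excluding every intermediate growth rate. Neither existence argument prevents, a priori, a Stanley sequence with growth rate $n^\alpha$ for $\alpha$ strictly between $\log_{p-1}p$ and $(p-1)/(p-2)$. A realistic intermediate target is to show that the set of attainable exponents $\{\alpha:a_n=n^{\alpha+o(1)}\text{ for some }p\text{-Stanley sequence}\}$ is closed in $[1,2]$, and then to enumerate its elements by induction on $p$; even this weaker statement seems to require substantially new ideas beyond the modular constructions developed in this paper.
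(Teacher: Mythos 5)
The statement you are addressing is a \emph{conjecture}, and the paper offers no proof of it; indeed the paper explicitly notes that almost no progress has been made even in the $p=3$ case since 1978 and that no Stanley sequence has ever been proven to be Type II. So there is no proof in the paper to compare yours against, and your proposal, read honestly, is a research program rather than a proof: every one of its three components ends with the hard step deferred. Concretely: (1) your Type~I component is essentially what the paper actually accomplishes --- the modular framework of Moy and Rolnick gives $S_p(A)=A+N\cdot S_p(0)$ for modular $A$, which yields the self-similarity and hence $\Theta(n^{\log_{p-1}p})$ growth, and the paper's contribution is verifying modularity for the family $\{0,n\}$, $n\in A_p$; this part is fine but only reproduces known territory. (2) Your Type~II component correctly reproduces the Odlyzko--Stanley counting heuristic (the relation $\alpha(p-2)=p-1$ and the logarithmic correction are right), but the step ``couple the randomized process back to the deterministic Stanley sequence'' is precisely the gap that has stood since 1978. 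The greedy Stanley process is deterministic, and the correlations between which integers are forbidden are exactly what concentration arguments for the triangle-free process and its relatives cannot handle here; no coupling of this kind is known, and asserting one as a plan does not constitute progress on it. (3) You concede the dichotomy outright, and your proposed intermediate target (closedness of the attainable exponent set, induction on $p$) is itself unsupported by any argument.

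In short, the proposal contains no gap relative to the paper only because the paper proves nothing here either; but as a proof attempt of the stated conjecture it is incomplete in all three of its parts, and the two parts that would constitute new mathematics (Type~II existence and the exclusion of intermediate growth rates) are left exactly as open as they were before. If your goal is to contribute something provable, the realistic move --- and the one the paper takes --- is to abandon the conjecture as a target and instead prove that specific families of initial sets generate modular, hence Type~I, sequences.
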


To date however there has been no $3$-Stanley sequence, or more generally $p$-Stanley sequence, that has been proven to have Type 2 growth. Despite this, there has been significant interest in studying the structure of Type 1 $3$-Stanley sequences (\!\!\cite{mod}, \cite{rol1}, \cite{rol2}). The most relevant class of Type 1 $3$-Stanley sequences stems from the work of Moy and Rolnick \cite{mod}, extending work of Rolnick \cite{rol2}, which gave the following class of Type 1 sequences.

\begin{defn}
Consider a set $A\subseteq\{0,\ldots,N-1\}$ with $0\in A$ such that there is no nontrivial $3$-term arithmetic progressions mod $ N$ among the elements of $A$. (Trivial arithmetic progressions refer to progressions with all elements equal.) A set $A$ is said to be \emph{modular} if for every integer $x$, there exists $y\ge z$ in $A$ such that $2y-z\equiv x\mod N$. Note that the second condition is equivalent to $x$, $y$, and $z$ being an arithmetic progression mod $ N$. Furthermore we say that $S_3(A)$ to is a \emph{modular} Stanley sequence if $A$ satisfies these conditions.
\end{defn}

Moy and Rolnick \cite{mod} conjecture that all $3$-Stanley sequences with Type 1 growth are all pseudomodular, a suitable generalization of modular sequences. In contrast, with general $p$-Stanley sequences, there is no such conjectured form for Type 1 sequences. However there is a natural analog of modular Stanley sequences, modular $p$-Stanley sequences. In particular one modifies the given definition to have no $p$-term arithmetic progressions and defines an analog of the second condition. This is defined more precisely in the next section.

In this paper we present two classes of modular $p$-Stanley sequences, one of which hints a difference between $3$-Stanley sequences and $p$-Stanley sequences for larger primes $p$ whereas the other appears to suggest a degree of similarity. The first demonstrates that for $p>3$, there exists a large class of integers $n$ for which $S_p(0,n)$ has Type 1 growth and in fact is a modular sequence. In particular for $p\ge 5$, if $2\cdot p^{k-1}<n<p^k$ and $p^k-n$ has no $p-1$ in its base $p$ expansion, then $S_p(0,n)$ has Type 1 growth. This is notable as there exist $n\neq i\cdot p^k$ for $1\le i\le p-1$ such that $S_p(0,n)$ exhibits Type 1 growth, unlike the case $p=3$ where Stanley and Odlyzko \cite{stan} conjecture only $S_3(0,3^k)$ and $S_3(0,2\cdot 3^k)$ have Type 1 growth among sequences of the form $S_3(0,n)$. Numerical evidence given by Moy and Rolnick \cite{mod} suggests that these are the only possible integer $n$ and thus appears to give a conjectural answer to a question raised by Moy and Rolnick \cite{mod} of classifying integers $n$ such that $S_p(0,n)$ is modular.

The second class is a generalization of Theorem 1.2 by Rolnick \cite{rol2}. These constructions are notable as they are among the first explicit constructions for large classes of modular $p$-sequences, with the only other large class of constructions present in the literature being that of basic sequences given by Moy and Rolnick \cite{mod}.

In Section \ref{sec:def} we provide some definitions and basic results on modular $p$-Stanley sequences that are used within this paper. In Section \ref{sec:const1} we demonstrate the first class of modular $p$-Stanley sequences, and in Section \ref{sec:const2} we demonstrate the second class of modular $p$-Stanley sequences. Section \ref{sec:concl} contains some ideas for future work in these directions.

\section{Definitions}
\label{sec:def}

This section provides the definitions and basic results on modular $p$-Stanley sequences necessary to prove our results. For further exposition, see \cite{mod}.

\begin{defn}
A set $A$ \textit{$p$-covers} $x$ if there exist $x_1,x_2,\ldots,x_{p-1}\in A$ such that $x_1<x_2<\cdots<x_{p-1}<x$ is an arithmetic progression.
\end{defn}

\begin{prop}
\label{thm:alt}
The $p$-Stanley sequence $S_p(A)$ is the unique sequence that starts with $A$, is $p$-free, and $p$-covers all $x\not\in S_p(A)$ with $x>\max(A)$.
\end{prop}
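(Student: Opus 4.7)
The plan is to prove this in two parts: first that $S_p(A)$ itself satisfies the listed properties, and then that no other sequence does.

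For the first part, $S_p(A)$ is $p$-free and begins with $A$ directly from the definition. For the covering property, I would take any $x > \max(A)$ with $x \notin S_p(A)$ and unwind the greedy construction: at the stage in the construction when $x$ was considered, it was rejected, which means adjoining $x$ to the terms of $S_p(A)$ already produced would have completed a $p$-term arithmetic progression. That progression must have $x$ as its largest element (since every earlier term was already less than $x$), so its other $p-1$ terms form an increasing arithmetic progression in $S_p(A)$ ending below $x$. This is exactly what it means for $S_p(A)$ to $p$-cover $x$.

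For uniqueness, suppose $T$ is another sequence with these three properties and, aiming at a contradiction, let $x$ be the smallest integer on which $T$ and $S_p(A)$ disagree. Since both sequences start with $A$, we have $x > \max(A)$, and by minimality every integer in $[0,x)$ belongs to $T$ if and only if it belongs to $S_p(A)$. Now split into cases according to which sequence contains $x$. If $x \in S_p(A) \setminus T$, then $T$ must $p$-cover $x$ by hypothesis, yielding an arithmetic progression $x_1 < \cdots < x_{p-1} < x$ inside $T$; these smaller terms also lie in $S_p(A)$, so together with $x$ they form a $p$-term arithmetic progression in $S_p(A)$, violating $p$-freeness. If instead $x \in T \setminus S_p(A)$, apply the same argument with the roles of $T$ and $S_p(A)$ exchanged, using the covering property of $S_p(A)$ established in the first part.

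I do not expect any substantial obstacle: the entire argument is a minimal-counterexample/induction-on-$x$ style proof, and the one place where care is needed is making sure that when $x$ is the smallest point of disagreement, the two sequences truly coincide on all of $[0,x)$, which is where the assumption $x > \max(A)$ enters. The rest is bookkeeping about how a rejected integer in the greedy construction must complete a progression whose other terms are already present.
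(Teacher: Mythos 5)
Your proof is correct and is essentially an expanded version of the paper's own (one-sentence) argument: membership of each $x>\max(A)$ is forced by whether $x$ would complete a $p$-term progression, which you formalize via the greedy construction for existence and a least-point-of-disagreement induction for uniqueness. No gaps; the paper simply leaves all of this implicit.
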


\begin{proof}
Since $x>\max(A)$ there are two cases. If $x$ is in $S_p(A)$, its addition to the sequence preserves that the sequence is $p$-free. If $x$ is not in $S_p(A)$, it follows that the addition of $x$ would have created a $p$-term arithmetic progression with largest term $x$ and with the remaining terms in $S_p(A)$.
\end{proof}

\begin{defn}
A set $A\subseteq\{0,1,\ldots,N-1\}$ is said to \textit{$p$-cover $x$ mod $N$} if there exist $x_1,x_2,\ldots,x_{p-1}\in A$ such that $x_1<x_2<\cdots<x_{p-1}$ and $x$ form an arithmetic progression mod $ N$. Restricting $0\le x<N$ and given the size restrictions for $A$ this is equivalent to $x_1<x_2<\cdots<x_{p-1}<x$ or $x_1<x_2<\cdots<x_{p-1}<x+N$ forming an arithmetic progression.
\end{defn}

\begin{defn}
A set $A\subseteq\{0,1,\ldots,N-1\}$ is a \textit{modular $p$-free set mod $N$} if $A$ contains 0, is $p$-free mod $N$, and $p$-covers all $x$ with $0\leq x<N$ and $x\not\in A$. A $p$-Stanley sequence is a \textit{modular $p$-Stanley sequence} if it has the form $S_p(A)$ for a modular $p$-free set $A$.
\end{defn}

We will refer to ``$p$-covering" and ``modular $p$-free" simply as ``covering" and ``modular" when $p$ is obvious. We write $A+B$ for $\{a+b\mid a\in A, b\in B\}$ and $c\cdot A$ for $\{c\cdot a\mid a\in A\}$. The following is the main theorem on modular $p$-Stanley sequences proved in \cite{mod}. It implies that a modular Stanley sequence grows asymptotically as $S_p(0)$. 

\begin{thm}[Theorem 6.5 in \cite{mod}]
If $A$ is a modular $p$-free set mod $N$, then $S_p(A)=A+N\cdot S_p(0)$. Note that $S_p(0)$ consists of all nonnegative integers with no $p-1$ in their base $p$ expansions.
\end{thm}

\begin{cor}[Corollary 6.6 in \cite{mod}]
Any modular $p$-Stanley sequence exhibits Type 1 growth.
\end{cor}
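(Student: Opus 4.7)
The plan is to read the growth of $S_p(A)$ directly off the structural identity $S_p(A)=A+N\cdot S_p(0)$ supplied by the preceding theorem. Write $|A|=m$ and sort $S_p(A)$ as $a_1<a_2<\cdots$. Since $A\subseteq\{0,1,\ldots,N-1\}$, the $m$ elements of $A+Nk$ all lie in the interval $[Nk,Nk+N)$, so these intervals are disjoint for distinct $k\in S_p(0)$. Consequently, if we denote by $b_1<b_2<\cdots$ the terms of $S_p(0)$, the $n$-th term of $S_p(A)$ satisfies $Nb_{\lceil n/m\rceil}\le a_n<Nb_{\lceil n/m\rceil}+N$. Thus $a_n=\Theta(b_{\lceil n/m\rceil})$, and since $m$ and $N$ are absolute constants (depending on $A$ but not $n$), it suffices to show $b_n=\Theta(n^{\log_{(p-1)}p})$.

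For that step I would invoke the classical description of $S_p(0)$ already recalled in the introduction: the greedy construction from $\{0\}$ yields exactly those nonnegative integers whose base-$p$ expansion avoids the digit $p-1$. The count of such integers that are strictly less than $p^k$ is $(p-1)^k$, so $b_{(p-1)^k}$ lies between $p^{k-1}$ and $p^k-1$. Interpolating via $k=\log_{p-1}n$ gives $b_n=\Theta(p^{\log_{p-1}n})=\Theta(n^{\log_{p-1}p})=\Theta(n^{\log_{(p-1)}p})$.

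Combining the two estimates yields
\[
a_n=\Theta\bigl(b_{\lceil n/m\rceil}\bigr)=\Theta\bigl((n/m)^{\log_{(p-1)}p}\bigr)=\Theta\bigl(n^{\log_{(p-1)}p}\bigr),
\]
which is exactly Type I growth, completing the proof.

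The corollary is essentially a bookkeeping consequence of the preceding theorem, so there is no serious obstacle; the only nontrivial ingredient is the asymptotic $b_n=\Theta(n^{\log_{(p-1)}p})$ for the base Stanley sequence $S_p(0)$, and that is immediate from the digit characterization. The one detail that merits care is checking that sorting $A+N\cdot S_p(0)$ really does interleave in blocks of size $m$, which follows from the disjointness of the intervals $[Nk,Nk+N)$.
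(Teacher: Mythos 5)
Your argument is correct and is exactly the route the paper intends: the paper states the corollary without proof, noting only that Theorem 6.5 of Moy--Rolnick ``implies that a modular Stanley sequence grows asymptotically as $S_p(0)$,'' and your write-up simply fills in that bookkeeping (block decomposition of $A+N\cdot S_p(0)$ plus the digit characterization of $S_p(0)$ recalled in the introduction). No gaps; the only detail worth keeping is the one you already flagged, the disjointness of the intervals $[Nk,Nk+N)$.
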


\section{First Class of $p$-Stanley sequences}
\label{sec:const1}

We use the notation $t_i(x)$ to refer to the digit corresponding to $p^i$ in the base $p$ expansion of $x$. We initially define a pair of sets which are critical for this section.

\begin{defn}
Let $A_p^k$ be the set of positive integers $n$ such that $2\cdot p^{k-1}<n\leq p^k$ with $p^k-n\in S_p(0)$. This is equivalent to $t_i(p^k-n)\neq p-1$ for all $i$ and additionally $t_{k-1}(p^k-n)\neq p-2$. Let $A_p=\bigcup\limits_{k=0}^{\infty}A_p^{k}$.
\end{defn}
For example the set $A_5$ begins $\{1,3,4,5,12,13,14,15,17,18\ldots\}$.
\begin{nota}
Let $S_p^k=\{x\mid x\in S_p(0),x<p^k\}$. Note by Lemma 6.4 in \cite{mod}, $S_p^k$ is $p$-free mod $p^k$ and covers $\{0,1,\ldots,p^k-1\}\setminus S_p^k$.
\end{nota}
In a manner closely related to the proof of Lemma 6.4 in \cite{mod}, we define a key procedure for the proof of Theorem 3.4.

\begin{defn}
For $0\leq x<p^k$ define the \textit{canonical covering} of $x$ to be the sequence $x_1,x_1,\ldots,x_{p-1}$ where $x_j=\sum_i t_i^{(j)}p^i$ and $t_i^{(j)}=t_i(x)$ if $t_i(x)\neq p-1$ and $t_i^{j}=j-1$ if $t_i(x)= p-1$.
\end{defn}

Note that the canonical covering is contained in $S_p^k$ and, as suggested by its name, $p$-covers $x$. Using these definitions it possible to prove our first result on modular $p$-Stanley sequences. 

\begin{thm}
For $p>3$ a prime and $n\in A_p$, $S_p(0,n)$ is a modular p-Stanley sequence.
\end{thm}

\begin{proof}
Suppose that $k$ is such that $p^{k-2}<n\leq p^{k-1}$, and let $A=\{0\}\cup(n+S_p^k)\setminus\{p^{k-1}(p-1)\}$. Note that $\max(A)<p^k$. Therefore it suffices to demonstrate $S_p(0,n)=S_p(A)$ and that $A$ is modular mod $p^k$. 

To demonstrate that $S_p(0,n)=S_p(A)$, it suffices by Proposition \ref{thm:alt} to prove that $A$ is $p$-free and covers all $n<x<p^k$ with $x\not\in A$. To demonstrate that $A$ is modular mod $ p^k$, it suffices to prove that $A$ is $p$-free mod $p^k$ and covers all $0\leq x<p^k$ mod $ p^k$ with $x\not\in A$. Thus it is sufficient to show the slightly stronger statement that $A$ is $p$-free mod $ p^k$ and covers all $n<x<p^k +n$ with $x\not\in A$ and $x\neq p^k$. Let $A'=-n+A=\{-n\}\cup S_p^k\setminus\{p^{k-1}(p-1)-n\}$. We demonstrate that $A'$ has no arithmetic progressions mod $p^k$ which will give us the first of our two desired results.

Since $S_p^k$ is $p$-free mod $p^k$, any arithmetic progression in $A'$ must contain $-n$. Suppose there is an arithmetic progression $\{a_i\}$ mod $p^k$ and define $b_i\equiv a_i\mod{p^{k-1}}$ with $0\le b_i<p^{k-1}$. It follows that $\{b_i\}$ is an arithmetic progression mod $p^{k-1}$. By the definition of $A_p$, we know that $p^{k-1}-n\in S_p^k$, so the progression $\{b_i\}$ is in fact an arithmetic progression mod $p^{k-1}$ in $S_p^{k-1}$. Thus the progression $\{b_i\}$ must be the constant arithmetic progression. It follows that $a_0\equiv a_1\equiv\cdots\equiv a_{p-1}\equiv -n \pmod{p^{k-1}}$ and therefore the only possible arithmetic progression mod $p^k$ in $A'$ is $i\cdot p^{k-1}-n$ for $0\leq i <p$. However, since $(p-1)p^{k-1}-n\not\in A'$, it follows that $A'$ is $p$-free mod $p^k$.

To prove the second result we demonstrate that $A'$ covers $0<x<p^k$ with $x\not\in A'$ and $x\neq p^k-n$. If $x=p^{k-1}(p-1)-n$, then $x$ is covered by $\{ip^{k-1}-n\}$ for $0\leq i<p-1$. Otherwise, $x\not\in S_p^k$. Since $x$ is covered by its canonical covering in $S_p^k$, the only cases we have to consider are those in which the canonical covering of $x$ contains $p^{k-1}(p-1)-n$.

Let $m=p^{k-1}(p-1)-n$, since $n\in A_p$, we know that $t_{k-1}(m)=p-2$, $t_{k-2}(m)<p-2$, and $t_i(m)\neq p-1$ for all $i$. Any $0<x<p^k$ whose canonical covering contains $m$ can be written in the form \[x_S=\sum_{i=0\atop i\not\in S}^{k-1}t_i(m)p^i+\sum_{i\in S}(p-1)p^i,\] where $S\subseteq\{0,1,\ldots,k-1\}$ is a set of digits such that $t_i(m)$ is the same for all $i\in S$. We earlier assumed that $x\neq m$ and $x\neq p^k-n=p^{k-1}+m$. This implies that $S\neq\emptyset,\{k-1\}$.

For the remainder of the proof fix an integer $a$ and an $S\subseteq\{0,1,\ldots,k-1\}$ such that $a=t_i(m)$ for all $i\in S$ and $S\neq\emptyset,\{k-1\}$. Let $j$ be $\max(S\setminus\{k-1\})$ and let $b=t_{j+1}(m)$. 

We know that $t_{k-1}(m)=p-2$ and $t_{k-2}(m)<p-2$, which implies that $\{k-2,k-1\}\not\subseteq S$. Thus this implies that if $j=k-2$, then $k-1\not\in S$.

We know that $0\leq a,b<p-1$, and we now consider four cases.

\textbf{Case 1:} $a=0$.

Let $\Delta=\sum_{i\in S}p^i$. Then $\{p^{k-1}(p-1)-n+i\cdot\Delta\}$ for $0\leq i<p-1$ is the canonical covering of $x_S$ as we are preserving all digits not equals to $p-1$ in $x_S$ and using $\{0,\ldots,p-2\}$ where $x_S$ has a digit $p-1$. However $\{i\cdot p^{k-1}-n+i\cdot\Delta\}$ for $0\leq i<p-1$ also covers $x_S$. 

We need to check that all of these terms are in $A'$. Since $p^{k-1}(p-1)-n+i\Delta\in S_p^k$ with first digit $p-2$, then $i\cdot p^{k-1}-n+i\cdot\Delta$ is identical except the first digit ranges from $0$ through $p-2$ for $0<i<p-1$ while for $i=0$ it follows as $i\cdot p^{k-1}-n+i\cdot\Delta=-n\in A'$.

\textbf{Case 2:} $0<a<p-1$ and $0\leq b<(p-3)/2$.

Let $j'>j$ be the smallest integer such that $t_{j'}(m)\geq(p-1)/2$. Note $j'$ exists since $t_{k-1}(m)=p-2\geq (p-1)/2$. In this case take
\begin{align*}
\Delta&=\sum_{i=j}^{j'-1}p^i(p-1)/2+\sum_{i\in S\setminus\{j,j+1,\ldots,j'\}}p^i,\\
&=(p^{j'}-p^j)/2+\sum_{i\in S\setminus\{j,j+1,\ldots,j'\}}p^i
\end{align*}
and consider the arithmetic progression $\{x_S-i\cdot\Delta\}$ for $0<i\leq p-1$. We claim this set is contained in $A'$.

We can compute the digits of each of these numbers. Write the digit expansion of $x_S-i\cdot\Delta$ as $x_S-i\cdot\Delta=\sum_l t_l^{(i)}p^l$. For $l\not\in\{j,j+1,\ldots,j'\}$, then $t_l^{(i)}$ matches the canonical covering. In particular, $t_l^{(i)}=t_l(m)$ if $i\not\in S$ and otherwise $t_l^{(i)}=p-1-i$.

Using explicit computation it is possible to determine the remaining digits. First note that $t_{j'}^{(i)}=t_{j'}-\lceil i/2\rceil$. For $j+1<l<j'$, we have $t_{l}^{(i)}=t_l(m)$ for $i$ even and $t_l^{(i)}=t_l(m)+(p-1)/2$ for $i$ odd. Furthermore, $t_{j+1}^{(i)}=t_l(m)+1$ for $i>0$ even and $t_{j+1}^{(i)}=t_{j+1}(m)+1+(p-1)/2$ for $i$ odd. Finally, $t_j^{(i)}=i/2-1$ for $i>0$ even and $t_j^{(i)}=(p-1)/2+(i-1)/2$ for $i$ odd. 

Now we check that all of these terms are in $A'$. The $j$th digit cycles through each value when $0\le i\le p-1$, and since it equals $p-1$ when $i=0$, it never equals $p-1$ in the range $0<i\le p-1$ that we are using to cover $x_S$. Since $t_{j'}(m)\geq(p-1)/2$, $t_{j'}^{(i)}$ never goes below 0, and $t_{j'}^{(i)}<t_{j'}(m)$. Therefore we have $t_{j'}^{(i)}<p-1$ for $i>0$. Furthermore since $t_l(m)<(p-1)$/2 for $j<l<j'$, neither of the two values that this digit takes is $p-1$. Furthermore the $(j+1)$st digit only takes on 3 values, none of which is $p-1$ since $t_{j+1}(m)=b<(p-3)/2$. Finally, $t_{j+1}^{(i)}\neq t_{j+1}(m)$ for $i>0$. Since $t_{j+1}(m)$ never takes on its original value again, none of the terms in this sequence are $m$.

\textbf{Case 3:} $0<a<p-1$ and $(p-3)/2\leq b<p-1$ and $(a,b,p)\neq(2,1,5)$.

We claim we can find $1\leq d\leq b+1$ such that $d\nmid p-a-1$ given the conditions in this case. If $p>5$ it is not hard to check\footnote{Let $\prod_i p_i^{e_i}$ be the prime factorization of $p-1$. If $p-1$ is not a prime power, then $p_i^{e_i}\in\{1,\ldots,(p-1)/2\}$ for all $i$. Otherwise, since $p$ is odd, we can write $p-1=2^k$. Then since $k>2$,  $2^{k-1}$ and $3$ are elements in $\{1,2,\ldots,(p-1)/2\}$ and thus the least common multiple is at least $3\cdot2^{k-1}\geq 2^k=p-1$.} that $\lcm(1,2,\ldots,(p-1)/2)\geq p-1$, so a number in this range must not divide $p-a-1<p-1$. If $p=5$, we can use $d=2$ unless $a=2$ (and therefore $p-a-1=2$). Furthermore if $p=5$, $a=2$, $b\geq 2$, we can use $d=3$.

Let \[\Delta=d\cdot p^j+\sum_{i\in S\setminus\{j\}}p^i.\] We claim that the arithmetic progression $\{x_S-i\cdot\Delta\}$ for $0<i\leq p-1$ is contained in $A'$.

None of the digits of $x_S-i\cdot \Delta$ is equal to $p-1$ except for possibly the $j$th and $(j+1)$st digits. The $j$th digit decreases by $d\pmod{p}$ so it only takes on the value $p-1$ when $i=0$. Moreover, subtracting $\Delta$, the $j$th digit forces the $(j+1)$st to decrement exactly $d-1$ times (due to a ``borrow"). Since $p-1>b\geq(p-3)/2\geq d-1$, the $(j+1)$st digit never takes on the value $p-1$ and never itself ``borrows'' from the $(j+2)$nd digit.

Thus it suffices to check that no term is equal to $p^{k-1}(p-1)-n$. This must occur before the $(j+1)$st digit has changed its value from $t_{j+1}(m)$. In this range, the $j$th digit has value $t_j(x_S)-i\cdot d=(p-1)-i\cdot d$. However if $(p-1)-i\cdot d=a$, then $d\mid p-a-1$, a contradiction. Thus this arithmetic progression is contained in $A'$, as desired.

\textbf{Case 4:} $a=2$, $b=1$, and $p=5$.

This special case is similar to Case 2. Note that for $j<j'<k$, it is not the case that $j'\in S$. In particular the only possibility is $j'=k-1$, but $\{j,k-1\}\subseteq S$ implies that $t_{j}(m)=t_{k-1}(m)$ and $t_j(m)=a=2$ whereas $t_{k-1}(m)=p-2=3$. Furthermore note that $j+1\neq k-1$ since $t_{k-1}(m)=3\neq1=t_{j+1}(m)$. Now if $t_{j+2}(m)\geq 1$, letting \[\Delta=5^{j+1}+3\cdot5^j+\sum_{i\in S\setminus\{j\}}5^i,\]
it is easy to check that $\{x_S-i\cdot\Delta\}$ for $0<i\leq 4$ is in $A'$.

Otherwise, $t_{j+2}(m)=0$. Let $j'>j+2$ be the smallest integer such that $t_{j'}(m)\geq 2$. This exists for the same reason as in Case 2.
Now let 
\begin{align*}
\Delta&=\left(\sum_{i=j+2}^{j'-1}2\cdot5^i\right)+5^{j+1}+3\cdot5^j+\sum_{i\in S\setminus\{j,j+1,\ldots,j'\}}5^i,\\
&=(5^{j'}-5^{j+2})/2+5^{j+1}+3\cdot5^j+\sum_{i\in S\setminus\{j,j+1,\ldots,j'\}}5^i.
\end{align*}
We cover $x_S$ by $\{x_S-i\cdot\Delta\}$ for $0<i\leq 4$. By exactly the same reasoning as in Case 2, this covering is in $A'$.
\end{proof}

We conjecture, but cannot currently prove, that these are the only integers $n$ such that
$S_5(0, n)$ exhibits Type 1 growth. Computational evidence provided by
Moy and Rolnick \cite{mod} suggests that the integers less than 100 such that $S_5(0; n)$ are well-behaved and in particular modular are as follows:
\[1, 3, 4, 5, 12, 13, 14, 15, 17, 18, 19, 20, 22, 23, 24, 25, 37, 39, 40, 42, 43, 44, 45, 47, 57, 58, 59,
60,\]\[ 62, 63, 64, 65, 67, 68, 69, 70, 72, 73, 74, 75, 82, 83, 84, 85, 87, 88, 89, 90, 92, 93, 94, 95, 97, 98, 99.\]
See Problem 6.7 in \cite{mod} for more detail. This matches exactly the integers which Theorem 3.4 would suggest,
giving some support for this conjecture

\section{Second Construction of $p$-Stanley Sequences}
\label{sec:const2}

This section presents a generalization of Theorem 1.2 given by Rolnick \cite{rol1} with a proof that is similar in spirit to that of Theorem 1.2. For this section, fix an odd prime $p$, and recall that $t_i(x)$ refers to the $i$th digit of $x$ in base $p$.

\begin{defn}
We say a (positive) integer $x$ \emph{dominates} an integer $y$ if $t_i(x)\geq t_i(y)$ for all integers $i$.
\end{defn}

Note that the set $S_p^k$ defined in Section \ref{sec:const1} is exactly the set of integers dominated by $\sum_{i=0}^{k-1}(p-2)p^i$.

\begin{thm}
Let $T\subseteq S_p^k$ be a nonempty set that is downward-closed under the domination ordering. Namely if $x\in T$ and $y$ is dominated by $x$, then $x\in T$. Then $S_p(T\cup \{p^k\})$ and $S_p(T\cup \{(p-1) p^k\})$ are modular $p$-Stanley sequences.
\end{thm}

Note that for $p=3$ this is Theorem 1.2 in Rolnick \cite{rol2}.

\begin{proof}
In both cases, we give an explicit description of the Stanley $p$-sequences and prove that this is the correct sequence.

We claim that $x\in S_p(T\cup \{p^k\})$ if and only if the following three conditions hold
\begin{itemize}
\item $t_i(x)\neq p-1$ for $i\neq k$,
\item $t_k(x)=0$ implies that $\sum_{i=0}^{k-1} t_i(x)p^i\in T$,
\item $t_k(x)=p-1$ implies that $\sum_{i=0}^{k-1} t_i(x)p^i\not\in T$.
\end{itemize}

For convenience let $L$ be the set of integers satisfying the above relations. Note that $L\cap\{0,1,\ldots,p^k\}=T\cup\{p^k\}$. It suffices by Proposition 2.2 to demonstrate that $L$ does not contain any $p$-term arithmetic progressions and that every integer not in $L$ and greater than $p^k$ is covered by a $p$-term arithmetic progression in $L$.

To show that $L$ is $p$-free we proceed by contradiction. Suppose that $x_1<\cdots<x_p$ form an arithmetic progression. Let $i$ be the smallest integer such that $t_i(x_1),\ldots, t_i(x_p)$ are not all equal. Since $p$ is prime and the first $i$ digits of $x_1,\ldots,x_p$ are the same, this implies that $\{t_i(x_1),\ldots,t_i(x_p)\}=\{0,\ldots,p-1\}$. Since $t_i(x)\neq p-1$ for $i\neq k$, we conclude that $i=k$.

Now there are some $j,j'$ such that $t_k(x_j)=0$ and $t_k(x_{j'})=p-1$. By the definition of $L$, this implies that $\sum_{i=0}^{k-1}t_i(x_j)p^i\in T$ and $\sum_{i=0}^{k-1}t_i(x_{j'})p^i\not\in T$. However, since $t_i(x_j)=t_i(x_{j'})$ for $i<k$, this is a contradiction.

It remains to show that every integer $x>p^{k}$ is covered by a $p$-term arithmetic progression. In order to do so we explicitly construct a $p$-term arithmetic progression $x_1\le x_2\le \cdots\le x_{p-1}\le x$ with the $x_i$ in $L$. If we have equality anywhere in this chain then $x$ in $L$; otherwise $x_1<x_2<\cdots<x_{p-1}<x$ as desired. For $0\le i\le k-1$ if $t_i(x)=\ell<p-1$, then set $t_i(x_j)=\ell$ for $1\le j\le p-1$. If instead $t_i(x)=p-1$, set $t_i(x_j)=j-1$ for $1\le j\le p-1$. Note that this is exactly the canonical covering from earlier. Now we subdivide into several possible cases.

\textbf{Case 1:} $t_k(x) \neq 0,~p-1$

Set $t_k(x_i)=\ell$. For the remaining digits, use the canonical covering as before.

\textbf{Case 2:} $t_k(x)=p-1$

We have two cases. If the last $k$ digits of $x_1$ are in $T$, then set $t_k(x_j)=j-1$. Otherwise set $t_k(x_j)=p-1$. In either case, use the canonical covering for the remaining digits.

\textbf{Case 3:} $t_k(x)=0$ 

If the last $k$ digits of $x_{p-1}$ are in $T$, set $t_k(x_j)=0$ and use the canonical covering for the remaining digits. Otherwise, set $t_k(x_j)=j$ and perform the canonical covering for $x-p^{k+1}$ for the remaining higher digits. (Note that since $x>p^k$ and $t_k(x)=0$ it follows that $x\ge p^{k+1}$.)

It is routine to verify in each case that the $x_j$ constructed are in $L$, completing the proof that $S_p(T\cup\{p^k\})=L$. To show that this is a modular Stanley sequence, let $L^{*}=\{x\mid x\in L,x<p^{k+1}\}$. We claim that $L^{*}$ is a modular set. The proof of this fact is nearly identical to the above analysis. Consider just the digits $t_i(x)$ for $0\leq i\leq k$.

Next we prove that $S(T\cup \{(p-1)p^k\})$ is a modular $p$-Stanley sequence. This proof is similar to the above argument though slightly more involved. We claim that $x\in S(T\cup\{(p-1)p^k\})$ if and only if the following four conditions hold
\begin{itemize}
\item $t_i(x)\neq p-1$ for $i\neq k, k+1$,
\item $t_k(x)\neq p-2$,
\item $t_{k+1}(x)=0$ implies that $t_k(x)=0$ and $\sum_{i=0}^{k-1} t_i(x)p^i\in T$ or $t_k(x)=p-1$,
\item $t_{k+1}(x)=p-1$ implies that $t_k(x)\neq p-2, p-1$, and if $t_k(x)=0$, then $\sum_{i=0}^{k-1} t_i(x)p^i\not \in T$.
\end{itemize}

Again let $L$ be the set defined by these four conditions. We show that $L$ is $p$-free and $p$-covers the part of its complement greater than $(p-1)p^k$.

For the sake of contradiction, suppose that $x_1<x_2<\cdots<x_p$ form an arithmetic progression with $x_i\in L$. Using the same idea as above we see that $t_i(x_1)=\cdots=t_i(x_p)$ for $0\le i\le k-1$. Since $t_k(x)\neq p-2$, it follows that $t_k(x_1)=\cdots=t_k(x_p)$. Now if $\{t_{k+1}(x_1),\ldots,t_{k+1}(x_p)\}=\{0,\ldots,p-1\}$, then there exist $j,j'$ such that $t_{k+1}(x_{j})=0$ and $t_{k+1}(x_{j'})=p-1$. Then we see that $\sum_{i=0}^{p-1}t_i(x_{j})p^i\in T$ and  $\sum_{i=0}^{p-1}t_i(x_{j'})p^i\not\in T$. Thus we conclude that $t_{k+1}(x_1)=\cdots=t_{k+1}(x_p)$, and by the same reasoning we see that $x_1=\ldots=x_p$, a contradiction. 

It remains to show that every integer $x>(p-1)p^{k}$ is covered by a $p$-term arithmetic progression. In order to do so, we explicitly construct a $p$-term arithmetic progression, $x_1\le x_2\le \cdots\le x_{p-1}\le x$ with $x_i\in L$. If we have equality anywhere in this chain then $x\in L$. Otherwise, $x_1<x_2<\cdots<x_{p-1}$ as desired. For $0\le i\le k-1$, if $t_i(x)=\ell<p-1$, then set $t_i(x_j)=\ell$ for $1\le j\le p-1$. Otherwise $t_i(x)=p-1$, and we set $t_i(x_j)=j-1$ for $1\le j\le p-1$. We will define this procedure as earlier to be the canonical covering. Now we subdivide into several possible cases and note that several of these cases degenerate when $p=3$.

\textbf{Case 1:}$t_{k+1}(x)=1,\ldots,p-2$ and $t_{k}(x)\neq p-2$ 

Set $t_{k+1}(x)=t_{k+1}(x_j)$ and $t_{k}(x)=t_{k}(x_j)$ for $1\le j\le p-1$. For the remaining digits, use the canonical covering.

\textbf{Case 2:} Either $t_{k+1}(x)=p-1$ and $t_{k}(x)=1,\ldots, p-3$ or $t_{k+1}(x)=0$ and $t_{k}(x)=p-1$

Set $t_{k+1}(x)=t_{k+1}(x_j)$ and $t_{k}(x)=t_{k}(x_j)$ for $1\le j\le p-1$. For the remaining digits, use the canonical covering as before.

\textbf{Case 3:} $t_{k+1}(x)=p-1$ and $t_{k}(x)=p-1$

Set $t_{k+1}(x_j)=j-1$ and $t_{k}(x_j)=j-1$ for $1\le j\le p-1$. For the remaining digits, use the canonical covering as before.

\textbf{Case 4:} $t_{k+1}(x)=1,\ldots, p-1$ and $t_{k}(x)=p-2$

Set $t_{k+1}(x_j)=t_{k+1}(x)$ and $t_{k+1}(x_j)=j-2$ for $2\le j\le p-1$ while $t_{k+1}(x_1)=t_{k+1}(x)-1$ and $t_{k}(x_1)=p-1$. For the remaining digits, use the canonical covering as before.

\textbf{Case 5:} $t_{k+1}(x)=0$ and $t_{k}(x)=1,\ldots,p-3$

Set $t_{k+1}(x_j)=j$ and $t_{k}(x_j)=t_k(x)$ for $1\le j\le p-1$. For the remaining digits, use the canonical covering $x-p^{k+2}$.

\textbf{Case 6:} $t_{k+1}(x)=0$ and $t_{k}(x)=p-2$

Set $t_{k+1}(x_j)=j$ and $t_{k}(x_j)=j-2$ for $2\le j\le p-1$. Also put $t_{k+1}(x_1)=0$ and $t_{k}(x_j)=p-1$. For the remaining digits, use the canonical covering $x-p^{k+2}$.

\textbf{Case 7:} $t_{k+1}(x)=t_k(x)=0$

Consider $x_{p-1}$ before setting $t_{k+1}(x_{p-1})$ and $t_k(x_{p-1})$. If $x_{p-1}\in L$, then set $t_{k+1}(x_j)=t_k(x_j)=0$ for $1\le j\le p-1$ and for the remaining digits, use the canonical covering $x$. Otherwise, set $t_{k+1}(x_j)=j$ and $t_k(x_j)=0$ for $1\le j\le p-1$ and use the canonical covering $x-p^{k+2}$ for the remaining digits.

\textbf{Case 8:} $t_{k+1}(x)=p-1$ and $t_k(x)=0$

Consider $x_{p-1}$ before setting $t_{k+1}(x_{p-1})$ and $t_k(x_{p-1})$. If $x_{p-1}\in L$, then set $t_{k+1}(x_j)=j-1$ and $t_k(x_j)=0$ for $1\le j\le p-1$ and for the remaining digits, use the canonical covering $x$. Otherwise set $t_{k+1}(x_j)=p-1$ and $t_k(x_j)=0$ for $1\le j\le p-1$  and for the remaining digits, use the canonical covering $x$.

In each case it is routine to verify in each case that the $x_j$ constructed are in $L$ and form an arithmetic progression with $x$ being the largest term. Finally, to show that this sequence is modular, let $L^{*}=\{x\mid x\in L, x<p^{k+2}\}$. We claim that $L^{*}$ is a modular set. Demonstrating that $L^{*}$ is modular is nearly identical to above analysis considering $t_i(x)$ for $0\leq i\leq k+1$ and is omitted.
\end{proof}

\section{Conclusions}
\label{sec:concl}

The two constructions in this paper are among the first classes of large modular $p$-Stanley sequences. These constructions raise several natural questions. The first follows naturally from the computational evidence in Section 3 and conjecturally answers a question of Moy and Rolnick \cite{mod} regarding which sets $\{0,n\}$ generate modular $p$-Stanley sequences.
\begin{conj}
The sequence $S_p(0,n)$ is a modular $p$-Stanley sequence if and only if $n\in A_p$.
\end{conj}
The next question deals with $p$-Stanley sequences generated in manners similar to that the second construction. 
\begin{ques}
Consider a set $S\subseteq \{1,\ldots, p^{k}-1\}$ and $1\le i\le p-2$. Under what conditions is $S_p(S\cup \{0,p^k,\ldots, i\cdot p^k\})$ a modular $p$-Stanley sequence?
\end{ques}
Finally, we end on another construction of $p$-Stanley sequences that appears to hold for small integers $x$ but for which an explicit characterization appears difficult. This is the natural analog of Lemma 3.5 in Rolnick \cite{rol2} and appears to suggest a further connection between the domination order and $p$-Stanley sequences.
\begin{conj}
Consider an integer $x$ with no $p-1$ in its base $p$ expansion. If $T$ is the set of all integers dominated by $x$, then $S_p(T)$ is a modular $p$-Stanley sequence.
\end{conj}
\section*{Acknowledgments}

This research was conducted at the University of Minnesota Duluth REU and was supported by NSF grant 1358695, NSF grant 1659047, and NSA grant H98230-13-1-0273. The authors wish to thank Joe Gallian for suggesting the problem and Colin Defant, Ben Gunby, Hannah Alpert, and Levent Alpoge for helpful comments on the manuscript.

\end{document}